\newtheorem{theorem}{Theorem}
\newcommand{\Per}{P}
\newcommand{\Cry}{C}
\newcommand{\setU}{U}
\newcommand{\setY}{Y}
\newcommand{\setZ}{Z}
\newcommand{\setX}{X}
\newcommand{\setStateTrans}{\mathcal{R}}
\newcommand{\setInputTrans}{\mathcal{P}}
\newcommand{\setInputs}{\mathcal{U}}
\newcommand{\setTransmissibleU}{\mathcal{U}_{TI}}
\newcommand{\transmissibleU}{u_{TI}}
\newcommand{\const}{\operatorname{const}}
\begin{document}
\title{The Mechanism of Scale-Invariance}

\author{\IEEEauthorblockN{Moritz Lang}\\
\IEEEauthorblockA{\textit{University of Applied Sciences Technikum Wien} \\
H\"ochst\"adtplatz 6, 1200 Wien, Austria \\
moritz.lang@technikum-wien.at}}                                                
\maketitle
\begin{abstract}                          
A system is invariant with respect to an input transformation if we can transform any dynamic input by this function and obtain the same output dynamics after adjusting the initial conditions appropriately. Often, the set of all such input transformations forms a Lie group, the most prominent examples being scale-invariant ($u\mapsto e^pu$, $p\in\mathbb{R}$) and translational-invariant ($u\mapsto pu$) systems, the latter comprising linear systems with transfer function zeros at the origin.
Here, we derive a necessary and sufficient normal form for invariant systems and, by analyzing this normal form, provide a complete characterization of the mechanism by which invariance can be achieved. In this normal form, all invariant systems (i) estimate the applied input transformation by means of an integral feedback, and (ii) then apply the inverse of this estimate to the input before processing it in any other way. 
We demonstrate our results based on three examples: a scale-invariant ``feed-forward loop'', a bistable switch, and a system resembling the core of the mammalian circadian network.
\end{abstract}

\begin{IEEEkeywords}                        
Scale-invariance; Fold-Change detection, normal form, feedback
\end{IEEEkeywords}                          

\section{Introduction}
The concept of invariances with respect to input transformations originated in the life sciences \cite{Goentoro2009}. It was introduced to formalize the observation that several naturally evolved biological networks, like bacterial chemotaxis \cite{Lazova2011}, show (approximately) the same response dynamics when excited by scaled environmental signals after equilibration to accordingly scaled, constant environments (Figure~\ref{fig:firstexamples}A,C\&E).
In its original formulation \cite{Goentoro2009,Shoval2010}, the concept was restricted to the special case of multiplicative input transformations ($u\mapsto s u$, $s > 0$), and is then referred to as scale-invariance (SI) or fold-change detection (FCD). It was furthermore initially required that every scale-invariant system possesses a globally asymptotically stable steady-state.
In subsequent work, the concept was however quickly generalized to (i) also cover other types of input transformations like translations or reflections \cite{Shoval2011}; and (ii) to not pose any restrictions on the number or stability of steady-states, such that the concept can nowadays as well be applied to e.g. oscillatory or bistable systems \cite{Lang2017} (Figure~\ref{fig:firstexamples}). 

The theoretical basis for the analysis of invariant systems was provided in \cite{Shoval2010}, where the concept of equivariance was introduced and where it was shown that, under mild assumptions, equivariance implies invariance and vice-versa. In \cite{Lang2016}, it was observed that, for input transformations forming one-parametric Lie groups, invariant systems ``calculate'' nonlinear derivatives of the input dynamics and thus generalize the notion of differentiators from linear systems theory. This proposed a close relationship between invariant systems and the (linear) concept of transfer function zeros, which was formally derived in \cite{Lang2017}. In the latter article, also higher-order invariances were introduced, as well as invariances with respect to time-dependent transformations, corresponding to transfer function zeros with higher multiplicity and zeros not lying at the origin, respectively. Together, these results established the concept of input invariances as a general dynamic property of nonlinear systems closely related and complementing zero dynamics \cite[p. 162ff]{Isidori1995}.

\begin{figure*}[tb]
	\centering
		\includegraphics[width=0.95\textwidth]{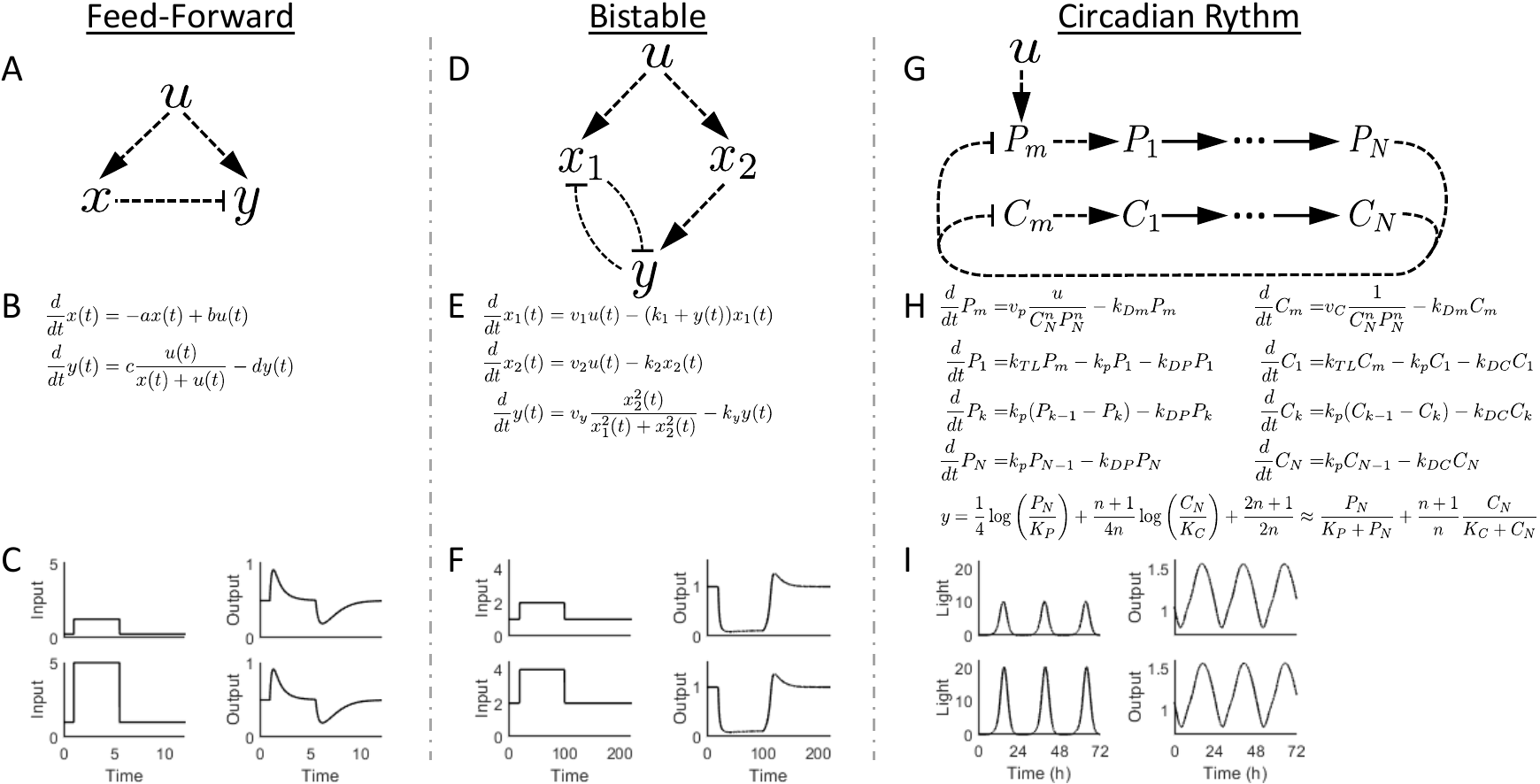}
	\caption[Example]{\label{fig:firstexamples}
	Three examples of scale-invariant networks. Left: incoherent feed-forward loop \cite{Lang2016}; center: bistable network; and right: circadian rhythm. Top: network diagrams of the systems, where dotted arrows with tips and bars represent activation and inhibition, respectively, and solid arrows conversions. Middle: ODEs of the three systems. Bottom: input/output dynamics for scaled versions of the same input dynamics.
	Parameters feed-forward: $a=1$, $b=4$, $c=10$, $d=4$;
	parameters bistable: $v_1=1$, $v_2=0.1$, $v_y=3.5$, $k_1=0.15$, $k_2=0.1$, $k_y=2$; 
	parameters circadian rhythm: $N = 4$, $k_{TL}=k_{DC}=k_{DP} = 0.25h^{-1}$, $k_{Dm} = 0.5h^{-1}$, $v_C = v_P = 0.01h^{-1}$, $k_p = 0.5h^{-1}$, $n=2$.
	}
\end{figure*}

Despite these advances, it yet remains unclear by which mechanisms invariance can be achieved. Similarly, formal methods to design input invariant systems showing desired response dynamics are still lacking. 
In this article, we fill this gap by deriving a (necessary and sufficient) normal form for systems invariant with respect to Lie groups of input transformations, thus covering the practically relevant cases of scale- and translational invariant systems. This normal form concisely explains how invariance can achieved by showing that all invariant systems rely on variations of the same, underlying mechanism. Furthermore, our article also lies the foundation for the rational design of invariant systems, which can be done by simply choosing the system-dependent terms of the normal form appropriately. 

\section{Background and Notation}\label{sec:scaleInvariance}
The exposition in this section shortly summarizes the theoretical foundation described in \cite{Shoval2010,Shoval2011,Lang2017} and closely follows the notation established in these articles.

We consider systems of ordinary differential equations with single inputs and outputs (systems in the following) of the form
\begin{subequations}\label{systemEq}
\begin{align}
\frac{d}{dt}x(t)&=f(x(t),u(t)),\ x(0)=x_0\\
y(t)&=h(x(t)).
\end{align}
\end{subequations}
The states $x(t)=[x_1(t),\ldots,x_n(t)]^T\in\setX$, $n\geq 1$, the input $u(t)\in\setU$ and the output $y(t)\in\setY$ depend on the time $t\in\mathbb{R}_{\geq 0}$, and take values in some state space $\setX\subseteq\mathbb{R}^n$, input space $\setU\subseteq\mathbb{R}$ and output space $\setY\subseteq\mathbb{R}$, respectively. The dynamics of the system are described by the vector field $f:\setX\times\setU\rightarrow\mathbb{R}^n$, while $x_0\in\setX$ denotes the initial conditions at $t=0$, and $h:\setX\rightarrow\setY$ a function mapping the state of the system to the output.
We assume that $f$ and $h$ are analytic, and that all admissible inputs $u\in\setInputs$, $u:\mathbb{R}_{\geq 0}\rightarrow\setU$ are piecewise continuous, i.e. $\setInputs\subseteq\mathcal{PC}(\mathbb{R}_{\geq 0}, \setU)$. We further assume that, for every initial condition $x_0\in\setX$ and every input $u\in\setInputs$, there exists a unique, piecewise differentiable and continuous solution for all $t\geq 0$ denoted by $\xi(t,x_0,u):=x(t)$. 

For the system (\ref{systemEq}), we consider sets $\setInputTrans=\{\pi_p\}_{p\in P}$ of one-to-one onto input transformations $\pi_p:U\rightarrow U$ parametrized by $p\in P$ in the interval $P\subseteq\mathbb{R}$. By a slight abuse of notation, we also denote by $\pi_p:\setInputs\rightarrow\setInputs$ the corresponding (point-wise) transformations of input trajectories defined by $\pi_p(u)(t)=\pi_p(u(t))$.
We assume that $\setInputTrans$ forms a one-parameter Lie group under function composition $\circ$ with law of composition given by $\phi:P\times P \rightarrow P$, i.e. $\pi_{p_2}\circ \pi_{p_1}=\pi_{\phi(p_1,p_2)}$. Recall that this implies that $\pi_p$ is differentiable in $\setU$ and analytic in $P$, and that $\phi$ is analytic in both its parameters \cite[p. 34]{Bluman1989}. 
In the following, we assume that $\setInputTrans$ is parametrized additively, i.e. such that $P=\mathbb{R}$, $\phi(p_1,p_2)=p_1+p_2$, and $\pi_0$ is the identity transformation. By the first fundamental theorem of Lie \cite[p. 37]{Bluman1989}, such an additive parametrization is always possible. For example, translations of the input are described by $\pi_p(\bar{u})=\bar{u}+p$ and scalings by $\pi_p(\bar{u})=e^p\bar{u}$, with $\bar{u}\in\setU$.

Given these definitions, the system (\ref{systemEq}) is invariant with respect to $\setInputTrans$ if, for every initial condition $x_0\in\setX$, every input $u\in\setInputs$ and every $\pi_p\in\setInputTrans$, there exists a $x_0'\in\setX$ such that \cite{Lang2017}
\begin{equation}
h(\xi(t,x_0,u))=h(\xi(t,x_0',\pi_p(u))).
\end{equation} 
Note that different to earlier definitions of invariance \cite{Shoval2010}, this definition does not require that the system possesses a globally asymptotically stable steady-state.

It was shown that the system (\ref{systemEq}) is invariant with respect to the input transformations $\setInputTrans$ if and only if it is equivariant with respect to the same transformations \cite{Shoval2010,Lang2017}, i.e. if there exist state transformations $\setStateTrans=\{\rho_p:\setX\rightarrow\setX\}_{p\in\mathbb{R}}$ such that, for every $\bar{u}\in\setU$ and every $\bar{x}\in\setX$,
\begin{subequations}\label{equivariance}
\begin{align}
f(\rho_p(\bar{x}),\pi_p(\bar{u}))&=\frac{\partial\rho_p}{\partial \bar{x}}(\bar{x})f(\bar{x},\bar{u})\\
h(\rho_p(\bar{x}))&=h(\bar{x}).
\end{align}
\end{subequations}

\section{The Mechanism of Invariance}
We define the following normal form for systems (\ref{systemEq}) invariant with respect to a one parameter Lie group $\setInputTrans=\{\pi_p\}_{p\in\mathbb{R}}$ of input transformations (Figure~\ref{fig:internalModel}):
\begin{subequations}\label{IM_system}
\begin{alignat}{6}
&& \frac{d}{dt}\hat{p}(t)	&=e(t)&&\label{common1}\\
      \text{\smash{\raisebox{\dimexpr.5\normalbaselineskip+1\jot}{common part}}}
&			\qquad\llap{\smash{\raisebox{\dimexpr.5\normalbaselineskip+1\jot}{$\left\{\begin{array}{c}\null\\[\jot]\null\end{array}\right.$}}}
&\hat{u}(t)							&=\pi_{-\hat{p}(t)}(u(t)))&&\label{common2}\\
&&\frac{d}{dt}z(t)				&=f_z(z(t), \hat{u}(t))&&\label{variable1}\\
&&e(t)											&=h_e(z(t), \hat{u}(t))&&\label{variable2}\\
      \text{\smash{\raisebox{\dimexpr1.\normalbaselineskip+2.0\jot}{variable part}}}
&			\qquad\llap{\smash{\raisebox{\dimexpr1.\normalbaselineskip+2\jot}{$\left\{\begin{array}{c}\null\\[\jot]\null\\[\jot]\null\end{array}\right.$}}}
&y(t)											&=h_z(z(t)).&&\label{variable3}
\end{alignat}
\end{subequations}
This normal form naturally decomposes into a part common to all invariant networks (\ref{common1}--\ref{common2}), and a variable part (\ref{variable1}--\ref{variable3}). The common part has a single state $\hat{p}(t)\in\mathbb{R}$, whereas the variant part has $n-1$ states $z\in\setZ\subseteq\mathbb{R}^{n-1}$. The two parts are interconnected in a feedback loop, whereby the output $\hat{u}$ of the common part serves as the input of the variable part, and the output $e$ of the variable part serves as an input to the common part. Note, that the variable part (i.e. $f_z$, $h_e$ and $h_z$) only indirectly depends on the input $u$ via $\hat{u}$.

For the following theorem, recall that we assume that every Lie group is parametrized additively.
\begin{theorem}\label{theorem:normalForm}
Given the assumptions stated in Section~\ref{sec:scaleInvariance}, the system (\ref{systemEq}) is invariant with respect to a one parameter Lie group $\{\pi_p\}_{p\in\mathbb{R}}$ of input transformations if and only if it can be transformed into normal form (\ref{IM_system}).
Furthermore, given state transformations $\{\rho_p\}_{p\in\mathbb{R}}$ satisfying (\ref{equivariance}), a transformation $(z,\hat{p})=(\delta_z(x),\delta_{p}(x))$ into normal form is given by $n-1$ functionally independent solutions of the linear first-order homogenous partial differential equation $E_\rho\delta_{z,i}(d)=0$, and a particular solution of $E_\rho\delta_{p}(d)=1$, with $E_\rho=\sum_{i}\left.\frac{\partial\rho_{p,i}}{\partial p}(x)\right|_{p=0}\frac{\partial}{\partial x_i}$ the infinitesimal generator of $\rho_p$.
\end{theorem}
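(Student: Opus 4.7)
\emph{Proof plan.} Using the invariance-equivariance correspondence (\ref{equivariance}), I would prove both directions of the equivalence and simultaneously exhibit the coordinate change.

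\emph{Normal form implies invariance.} Assume the system is already in form (\ref{IM_system}) with state $(z,\hat{p})$, and define candidate state transformations $\rho_p(z,\hat{p}) := (z,\hat{p}+p)$, whose Jacobian is the identity. Under $\rho_p$ combined with $u\mapsto\pi_p(u)$, equation (\ref{common2}) evaluates to $\pi_{-(\hat{p}+p)}(\pi_p(u)) = \pi_{-\hat{p}}(u)$ by the additive composition law $\phi(p_1,p_2) = p_1+p_2$. Consequently $f_z$, $h_e$ and $h_z$ are unchanged, verifying (\ref{equivariance}) and hence invariance.

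\emph{Invariance implies normal form.} By equivariance, $\{\rho_p\}$ is a one-parameter Lie group on $\setX$ with infinitesimal generator $E_\rho$. The classical straightening / flow-box theorem --- equivalent to the method of characteristics for the linear first-order PDE $E_\rho\delta = g$ --- provides, near any point where $E_\rho\neq 0$, $n-1$ functionally independent invariants $\delta_{z,i}$ satisfying $E_\rho\delta_{z,i} = 0$ and a canonical coordinate $\delta_p$ satisfying $E_\rho\delta_p = 1$. In the coordinates $(z,\hat{p}) := (\delta_z(x),\delta_p(x))$, the action of $\rho_p$ reduces to the pure translation $(z,\hat{p})\mapsto(z,\hat{p}+p)$, because the invariants are preserved along orbits while $\hat{p}$ accumulates at unit rate. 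Writing $\tilde{f},\tilde{h}$ for the transformed vector field and output, equivariance reads $\tilde{f}(z,\hat{p}+p,\pi_p(u)) = \tilde{f}(z,\hat{p},u)$ and $\tilde{h}(z,\hat{p}+p) = \tilde{h}(z,\hat{p})$ for every $p$. Setting $p=-\hat{p}$ in the first relation shows that $\tilde{f}$ depends on $(\hat{p},u)$ only through $\hat{u} := \pi_{-\hat{p}}(u)$, and defining $f_z(z,\hat{u}) := \tilde{f}_z(z,0,\hat{u})$, $h_e(z,\hat{u}) := \tilde{f}_p(z,0,\hat{u})$, $h_z(z) := \tilde{h}(z,0)$ reproduces exactly (\ref{common1})--(\ref{variable3}).

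\emph{Main obstacle.} The delicate step is the construction and regularity of the canonical coordinates. Local solvability of $E_\rho\delta_{z,i} = 0$ and $E_\rho\delta_p = 1$ is automatic whenever $E_\rho\neq 0$, which is forced by any non-trivial group action, but a genuinely global diffeomorphism on $\setX$ requires the orbits of $\rho_p$ to admit a global transversal slice (for instance under a free and proper action). I would therefore phrase the theorem as a local statement, or restrict attention to a region of $\setX$ on which the orbit structure is globally rectifiable, flagging the global case as a separate regularity assumption.
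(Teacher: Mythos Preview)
Your proposal is correct and follows essentially the same route as the paper: both directions rest on the equivariance characterization, with sufficiency via $\rho_p(z,\hat{p})=(z,\hat{p}+p)$ and necessity via canonical (straightening) coordinates for $\{\rho_p\}$, after which the equivariance identity forces dependence only on $\hat{u}=\pi_{-\hat{p}}(u)$. The only cosmetic difference is that the paper also rectifies the input action via a coordinate $u^*=\delta_\pi(u)$ (so that $\pi_p$ becomes $u^*\mapsto u^*+p$) before drawing the conclusion, whereas you substitute $p=-\hat{p}$ directly; your added caveat on local versus global rectifiability goes beyond what the paper addresses, which simply invokes \cite[p.~45ff]{Bluman1989}.
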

\begin{proof}
For sufficiency, consider a system given in normal form (\ref{IM_system}). Then, $\rho_p(z,\hat{p})=(z,\hat{p}+p)$ satisfies (\ref{equivariance}), that is, the system is equivariant and thus invariant with respect to $\{\pi_p\}_{p\in\mathbb{R}}$.

For necessity, consider a system (\ref{systemEq}) invariant with respect to $\{\pi_p\}_{p\in\mathbb{R}}$, with state transformations $\{\rho_p\}_{p\in\mathbb{R}}$ satisfying (\ref{equivariance}). 
Recall that for every one-parameter Lie group $\{\tau_p:D\rightarrow D\}_{p\in\mathbb{P}}$ of transformations, there exists a one-to-one and continuously differentiable transformation $\delta_\tau:D\rightarrow D^*$, $d^*=\delta_\tau(d)$, satisfying $\tau_p^*(d^*)=\tau_p^*(\delta_\tau(d))=d^*+(0,\ldots,0,p)$. The elements $\delta_{\tau,1},\ldots,\delta_{\tau,n-1}$ of $\delta_{\tau}$ are given by $n-1$ functionally independent solutions of the linear first-order homogenous partial differential equation $E_\tau\delta_{\tau,i}(d)=0$, and $\delta_{\tau,n}$ satisfies $E_\tau\delta_{\tau,n}(d)=1$, with $E_\tau=\sum_{i}\left.\frac{\partial}{\partial p}{\tau_{p,i}}(d)\right|_{p=0}\frac{\partial}{\partial d_i}$ \cite[p. 45ff]{Bluman1989}. 
Let $\delta_{\rho}:\setX\rightarrow\setZ\times\mathbb{R}$, $(z,\hat{p})=\delta_{\rho}(x)$, and $\delta_{\pi}:\setU\rightarrow\mathbb{R}$, $u^*=\delta_\pi(u)$, be such transformations for $\{\rho_p\}_{p\in\mathbb{R}}$ and $\{\pi_p\}_{p\in\mathbb{R}}$, respectively.
In $\setZ\times\mathbb{R}$, the Jacobian $\frac{\partial\rho_p^*}{\partial (z,\hat{p})}$ is the identity, and (\ref{equivariance}) becomes
\begin{align*}
f^*(z,\hat{p}+p, u^*+p)&=f^*(z, \hat{p},u^*)\\
h^*(z,\hat{p}+p)&=h^*(z,\hat{p}),
\end{align*}
with $f^*(z, \hat{p},u^*)=\frac{\partial\delta_\rho}{\partial x}(\delta_\rho^{-1}(z, \hat{p}))f(\delta_\rho^{-1}(z, \hat{p}),\delta_\pi^{-1}(u^*))$ and $h^*(z,\hat{p}):=h(\delta_\rho^{-1}(z,\hat{p}))$. 
Since the system is irreducible, this implies that $f^*$ can only depend on $u^*-\hat{p}$, but not on $u^*$ or $\hat{p}$ separately, and $h^*$ must not depend on $\hat{p}$ at all. We then obtain that
\begin{align*}
f^*(z,\hat{p},u^*)&=f^*(z,0,u^*-\hat{p})\\
&=f^*(z,0,\pi_{-\hat{p}}^*(u^*))\\
&=f^*(z,0,\delta_{\pi}(\pi_{-\hat{p}}(u)))\\
h^*(z,\hat{p})&=h^*(z,0).
\end{align*}
With $\hat{u}=\pi_{-\hat{p}}(u)$, $(f_z,h_{e})(z,\hat{u})=f^*(z,0,\delta_{\pi}(\hat{u}))$ and $h_z(z)=h^*(z,0)$, we then obtain normal form~(\ref{IM_system}). 
\end{proof}

Given an invariant system in normal form (\ref{IM_system}), we refer to $\hat{p}$ and $\hat{u}$ as its estimates of the input transformation and the ``untransformed input'', respectively, and to $e$ as the adaptation error. This nomenclature is motivated by the following considerations: assume that we transform a signal $u_{org}$ by $\pi_p\in\setInputTrans$ before using it as the input for (\ref{IM_system}), i.e. we set $u=\pi_p(u_{org})$. Further, assume that this input $u$ and the initial conditions $(z_0,\hat{p}_0)$ solve the problem of zeroing the error signal $e(t)$ (see \cite[p. 162ff]{Isidori1995}), i.e. that $e(t)=0$ and, thus, $\hat{p}(t)=\hat{p}_0$ for all $t\geq 0$. Then,
\begin{align*}
\hat{u}(t)=\pi_{-\hat{p}(t)}(\pi_p(u_{org}(t)))=\pi_{p-\hat{p}_0}(u_{org}(t)).
\end{align*}
As we show below, it is always possible to choose the normal form such that $\hat{p}_0=p$. Then, $\pi_{p-\hat{p}_0}=\pi_0$ becomes the identity transformation, and the system's estimate of the untransformed input is precise, i.e. $\hat{u}=u_{org}$.
 
Given a system in normal form (\ref{IM_system}), we refer to all inputs $\transmissibleU\in\setInputs$ for which a $z_0\in\setZ$ exists such that $u$ zeroes the adaptation error $e$ for the initial conditions $z(0)=z_0$ and $\hat{p}(0)=0$, as transmissible inputs $\setTransmissibleU$. 
If, for a given transmissible input $\transmissibleU\in\setTransmissibleU$, $\hat{p}(t)\rightarrow 0$ for all initial conditions in an open region $B\subseteq\setZ$ around $z(0)=z_0$ and $\hat{p}(0)=0$, we say that $\transmissibleU$ is stable transmissible, and if $B=\setZ$, that $\transmissibleU$ is globally stable transmissible. Clearly, if $u_{org}\in\setTransmissibleU$ is (globally) stable transmissible and $u(t)=\pi_p(u_{org}(t))$, we get that $\hat{p}(t)\rightarrow p$ and thus $\hat{u}(t)\rightarrow u_{org}(t)$ for all $(z(0),\hat{p}(0)-p)\in B$.
We thus interpret the normal form (\ref{IM_system}) in the sense that the invariant system estimates the input transformation $\pi_p$ by means of an integral feedback, and immediately applies the inverse $\pi_{\hat{p}}^{-1}$ of this estimated transformation to the input before processing it in any other way (Figure~\ref{fig:internalModel}). When interpreting the adaptation error $e(t)$ as an additional output, the internal model principal (IMP, see e.g. \cite{Sontag2003}) then suggests that the normal form (\ref{IM_system})--more precisely, its variant part (\ref{variable1}\&\ref{variable2})--should contain an internal model capable to generate the transmissible input.

\begin{figure}[tb]
		\includegraphics[width=0.45\textwidth]{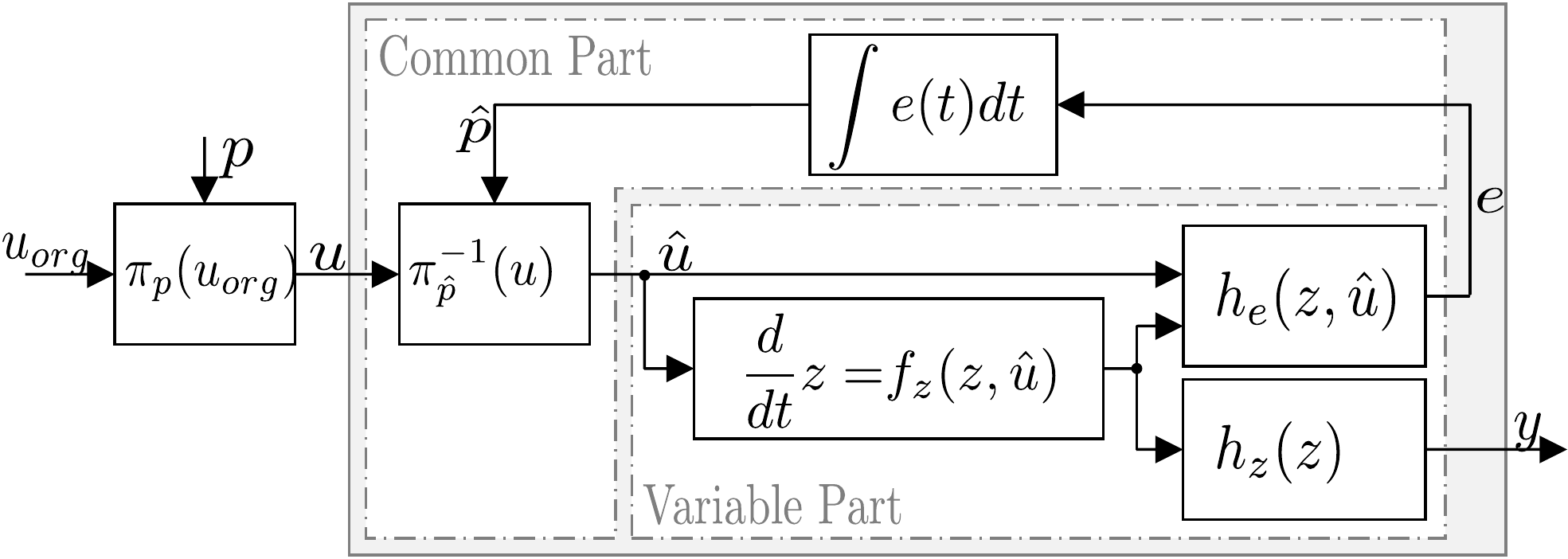}
	\caption[Internal Model]{\label{fig:internalModel}
	Normal form of invariant systems. Every invariant system in normal form (gray box) naturally decomposes into two sub-systems (white boxes): (i) a part common to all invariant systems responsible to estimate and reverse the applied input transformation $\pi_p$ via an integral feedback; and (ii) a system-specific, variable part which shapes the input-output dynamics and determines the transmissible inputs $\setTransmissibleU$.
	}
\end{figure}

The transformation of an invariant system into normal form (\ref{IM_system}) is however not unique: given a transformation $[z,\hat{p}]=[\delta_z(x),\delta_{p}(x)]$ into normal form (\ref{IM_system}), $[\tilde{z},\tilde{p}]=[\tau_z(\delta_z(x)),\delta_{p}(x)+\tau_{p}(\delta_z(x))]$ is a transformation into normal form, too, with $\tau_z:\setZ\rightarrow\tilde{\setZ}$ and $\tau_{p}:\setZ\rightarrow\mathbb{R}$ analytic functions.  
The function $\tau_z$ thereby only changes the coordinates of the variable part of the normal form, but does not influence the dynamics of $\hat{p}$ or $e$. The set $\setTransmissibleU$ of transmissible inputs is thus invariant with respect to $\tau_z$, and we thus w.l.o.g. assume that $\tau_z(z)=z$ in the following. The function $\tau_{p}$, on the other hand, has a profound influence on $\setTransmissibleU$, and thus on our interpretation of the IMP. Given an invariant system in normal form (\ref{IM_system}), this transformation results in (omitting dependencies on $t$ for clarity):
\begin{subequations}
\begin{align*}
\frac{d}{dt}\tilde{p}	=&\tilde{e}\\
\tilde{u}							=&\pi_{-\tilde{p}}(u)\\
\frac{d}{dt}z					
=&f_z(z,\pi_{\tau_{p}(z)}(\tilde{u}))\\
\tilde{e}							
=&h_e(z, \pi_{\tau_{p}(z)}(\tilde{u}))+\frac{\partial\tau_{p}}{\partial z}f_z(z,\pi_{\tau_{p}(z)}(\tilde{u}))\\
y											=&h_z(z)
\end{align*}
\end{subequations}
Thus, given a system in normal form, if an input $\hat{u}$ zeros the function $h_e(z, \hat{u})+L_{f_z}\tau_{p}$, with $\tau_{p}:\setZ\rightarrow\mathbb{R}$ analytic and $L_{f_z}$ the Lie derivative in the direction of $f_z$, then there exists another normal form of the system with $\pi_{-\tau_{p}(z)}(\hat{u})$ a transmissible input.

If $\tau_{p}$ is constant, this results in a constant offset $\tilde{p}=\hat{p}+\const$ of the invariant system's estimate of the applied input transformation and a corresponding change in the transmissible inputs. This justifies that we can always assume $\hat{p}(t)\rightarrow p$ whenever $\hat{p}(t)$ converges. Note, that already the notion of an original ``untransformed input'' $u_{org}$ which we used above and to which we applied the input transformation $\pi_p$ to generate $u=\pi_p(u_{org})$ was not unique. Indeed, we can always interpret $\tilde{u}_{org}=\pi_q(u_{org})$ as another ``untransformed input'' to which we applied the input transformation $\pi_{p-q}$ to generate exactly the same input $u=\pi_{p-q}(u_{org})$. The result above means that, for every such interpretation, a corresponding normal form exists.

For non-constant transformations $\tau_{p}$ of the normal form, the transmissible inputs and even their number may change more profound. For example, in Section~\ref{section:bistableExample}, we discuss two normal form representations of the same system: (i) the first has a single, globally stable, constant transmissible input, for which the variable part of the normal form is bistable; and (ii) the second has two stable and one unstable constant transmissible inputs, for each of which the variable part of the normal form is monostable. 
In such cases, the IMP suggests that the invariant system possesses (usually distinct) internal models which can generate all sets of transmissible inputs corresponding to all normal forms. However, in many practically relevant cases, the system's structure or prior knowledge likely renders a given normal form ``natural''. Indeed, as our third example shows (Section~\ref{section:circRhythm}), such prior knowledge might exist in form of the transmissible inputs themselves.

\section{Examples}
\subsection{A scale-invariant feed-forward loop}
In the literature, it is often stated that systems can achieve invariance by two distinct mechanisms: negative feedback and incoherent feedforward loops \cite{Adler2018}. In spite of this research, this distinction however seems to exists mainly on the level of specific choices of coordinates used to model the system, however justified, than to be a property of the system itself. 
For example, consider the system in ``feed-forward form'' shown in Figures~\ref{fig:firstexamples}A--C. This system is equivariant with respect to the input and state transformations $\setInputTrans=\{\pi_p(\bar{u})=e^p \bar{u}\}_{p\in\mathbb{R}}$ and $\setStateTrans=\{\rho_p(\bar{x},\bar{y})=[e^p\bar{x},\bar{y}]\}_{p\in\mathbb{R}}$, and thus scale-invariant.

The infinitesimals of the state transformations $\setStateTrans$ are given by $\eta_\rho(x,y)=\left.\frac{\partial}{\partial p}\rho(x,y)\right|_{p=0}=[x,0]^T$, and the infinitesimal generator by $E_\rho(x,y)=\eta_\rho(x,y)\cdot\left[\frac{\partial}{\partial x},\frac{\partial}{\partial y}\right]^T=x\frac{\partial}{\partial x}$. The transformations $z=\delta_{z}(x,y)=y$ and $\hat{p}=\delta_{p}(x,y)=\log(x)$ solve the partial differential equations 
\begin{subequations}
\begin{align}
E_\rho(x,y)\delta_{z}(x,y)&=x\frac{\partial}{\partial x}\delta_{\rho,1}(x,y)=0\\
E_\rho(x,y)\delta_{p}(x,y)&=x\frac{\partial}{\partial x}\delta_{\rho,2}(x,y)=1,\label{ff_PDE2}
\end{align}
\end{subequations}
and transform the system into the normal form
\begin{subequations}
\begin{align*}
\frac{d}{dt}\hat{p}(t)	&=-a+b\hat{u}(t)\\
\hat{u}(t)							&=e^{-\hat{p}(t)}u(t)\\
\frac{d}{dt}z(t)				&=c\frac{\hat{u}(t)}{1+\hat{u}(t)}-d z(t)\\
y(t)											&=z(t).&&
\end{align*}
\end{subequations}

The constant function $\transmissibleU(t)=\frac{a}{b}$ zeros the adaptation error and is thus a transmissible input. For all other constant inputs $u=\pi_p(\transmissibleU)=e^p\transmissibleU$, the system estimates the applied input transformation $\pi_p$ and rescales the input accordingly, i.e. $\hat{u}(t)=\pi_{p-\hat{p}(t)}(\transmissibleU(t))\rightarrow\transmissibleU(t)$.
However, every transformation $\tilde{\delta}_{p}([x,y]^T)=\log(x)+\bar{p}$, $\bar{p}=\const$, is a solution of (\ref{ff_PDE2}), too. Applying this transformation instead of $\delta_{p}$ results in a normal form for which the transmissible input becomes $\tilde{u}_{TI}(t)=\exp(-\bar{p})\frac{a}{b}$.

\subsection{A scale-invariant bistable system}\label{section:bistableExample}
Consider the system shown in Figure~\ref{fig:firstexamples}D--F. The system is equivariant, and thus invariant, with respect to the input transformations $\pi_p(u)=e^pu$, with state transformations given by $\rho_p(x_1,x_2,y)=[e^px_1,e^px_2,y]^T$. 
The infinitesimal generator is given by $E_\rho(x_1,x_2,y)=x_1\frac{\partial}{\partial x_1}+x_2\frac{\partial}{\partial x_2}$, and the corresponding partial differential equations 
are solved by $[z_1,z_2,\hat{p}]^T=[x_1/x_2,y,\log(x_2)]^T$, transforming the system into the normal form
\begin{subequations}\label{bisNormal1}
\begin{align}
\frac{d}{dt}\hat{p}(t)	&=v_2\hat{u}-k_2\\
\hat{u}(t)						&=e^{-\hat{p}(t)}u(t)\\
\frac{d}{dt}z_1(t)				&=(v_1-v_2z_1)\hat{u}+(k_2-k_1-z_2)z_1\\
\frac{d}{dt}z_2(t)				&=v_y\frac{1}{1+z_1^2}-k_yz_2\\
y(t)											&=z_2(t).
\end{align}
\end{subequations}
The normal form's single transmissible input is given by $\transmissibleU(t)=\frac{k_2}{v_2}$. When choosing the parameters appropriately, the variable part is bistabile for this input (\ref{fig:firstexamples}F).

\begin{figure}[tb]
	\centering
	\includegraphics[width=0.4\textwidth]{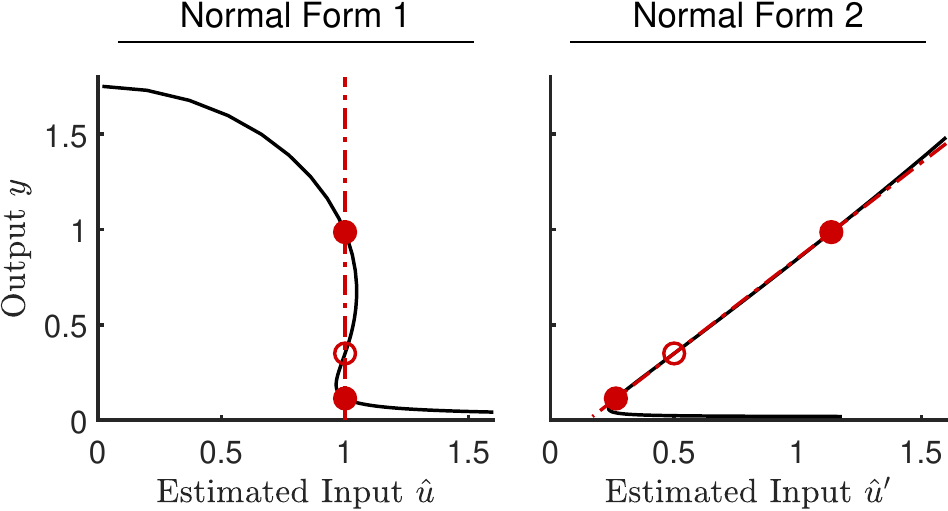}
	\caption{\label{fig:bisNullklines}
	Nullclines of the two normal forms of the scale-invariant bistable system. The normal form (\ref{bisNormal1}) possesses only one transmissible input $\transmissibleU$, for which the variable part of the normal form is bistable (left). In contrast, the normal form (\ref{bisNormal2}) possesses two stable and one unstable transmissible input, for each of which the variable part is mono-stable (right).
	}
\end{figure}

On the other hand, also the transformation $[z_1,z_2,\tilde{p}]^T=[x_1/x_2,y,\log(x_1)]^T$ transforms the system into normal form, now given by
\begin{subequations}\label{bisNormal2}
\begin{align}
\frac{d}{dt}\tilde{p}(t)	&=v_1\tilde{u}-(k_1+z_2)\\
\tilde{u}(t)						&=e^{-\tilde{p}(t)}u(t)\\
\frac{d}{dt}z_1(t)				&=(v_1-v_2z_1)z_1\tilde{u}+(k_2-k_1-z_2)z_1\\
\frac{d}{dt}z_2(t)				&=v_y\frac{1}{1+z_1^2}-k_yz_2\\
y(t)											&=z_2(t).
\end{align}
\end{subequations}
Different to the previous normal form, this normal form possesses three transmissible inputs, two stable and one unstable one (Figure~\ref{fig:bisNullklines}), while the variable part of the system is monostable. It is however easy to see that the bistability was merely shifted from the variable part of the first normal form into the estimation of the untransformed input of the second one--our interpretation changed, while the system's I/O-dynamics remained the same. 

\subsection{Mammalian circadian Rhythm}\label{section:circRhythm}
Consider the model shown in Figure~\ref{fig:firstexamples}G--I. This model is a simplified representation of the core of the mammalian circadian rhythm (compare \cite{Comet2012,Leloup2003}), consisting of the two species \textit{Per} and \textit{Cry} (denoted by $\Per$ and $\Cry$) forming a dual negative feedback loop. Subscripts $m$ correspond to \textit{Per} and \textit{Cry} mRNA levels, and subscripts $1,\ldots,N$, $N\in\mathbb{N}$, to different post-translational states, localizations and similar of the respective proteins. For simplicity, formation of the PER-CRY complex and (indirect) inhibition of \textit{Per} and \textit{Cry} mRNA production by this complex is lumped into a single inhibitory Hill equation, utilizing the separation of timescales as compared to the other relevant reactions. Daylight exposure leads to an increase in Per mRNA expression (but not of \textit{Cry} mRNA), represented by the input $u$. 
The output $y$ was chosen purely hypothetical and merely to demonstrate that an appropriate output rendering the system scale-invariant (see below) can be approximated by simple Michaelis-Menten kinetics. 

With $\hat{p}=\log\left(\Per_m^{\frac{2n+1}{n+1}}\right)+\alpha\tau_{p}(\delta_z(x))$, $\tau_{p}(\delta_z(x))=\log\left(\Per_m^{\frac{n}{n+1}}\Cry_N\right)$, $z_{P,k}=\Per_m^{-1}\Per_k$,
$z_{C,m}=\Per_m^{\frac{n}{n+1}}\Cry_m$, and $z_{C,k}=\Per_m^{\frac{n}{n+1}}\Cry_k$, the system is transformed into the normal form
\begin{align*}
\frac{d}{dt}\hat{p}				&= \frac{2n+1+\alpha n}{n+1}\Omega(\hat{u})+\alpha\left(k_p\frac{z_{C,N-1}}{z_{C,N}}-k_{DC}\right)\\
\hat{u}										&= \exp(-\hat{p})u\\
\frac{d}{dt}z_{P,1}				&= k_{TL}-z_{P,1}(k_p+k_{DP}+\Omega(\hat{u}))\\
\frac{d}{dt}z_{P,k}				&= z_{P,k-1}-z_{P,k}(k_p+k_{DP}+\Omega(\hat{u}))\\
\frac{d}{dt}z_{P,N} 			&= k_pz_{P,N-1}-z_{P,N}(k_{DP}+\Omega(\hat{u}))\\
\frac{d}{dt}z_{C,m}				&= v_C\frac{1}{z_{C,N}^n z_{P,N}^n}-z_{C,m}\left(k_{Dm}-\frac{n}{n+1}\Omega(\hat{u})\right)\\
\frac{d}{dt}z_{C,1} 			&= k_{TL}z_{C,m}-z_{C,1}\left(k_p+k_{DC}-\frac{n}{n+1}\Omega(\hat{u})\right)\\
\frac{d}{dt}z_{C,k} 			&= k_pz_{C,k-1}-z_{C,k}\left(k_p+k_{DC}-\frac{n}{n+1}\Omega(\hat{u})\right)\\
\frac{d}{dt}z_{C,N} 			&= k_pz_{C,N-1}-z_{C,N}\left(k_{DC}-\frac{n}{n+1}\Omega(\hat{u})\right),
\end{align*}
with
\begin{align*}
\Omega(\hat{u})						&= v_p\frac{1}{z_{P,N}^nz_{C,N}^n}\pi_{\alpha\log(z_{C,N})}(\hat{u})-k_{Dm}.
\end{align*}
\begin{figure}[tb]
	\centering
	\includegraphics[width=0.4\textwidth]{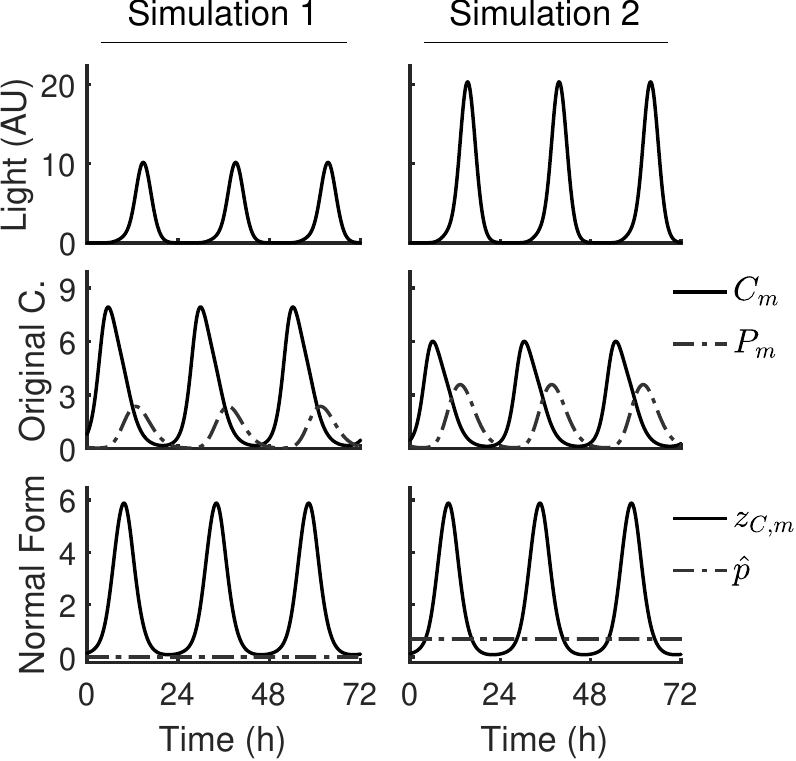}
	\caption{\label{fig:osciDynamics}
	Dynamics of the circadian rhythm model in response to the invariant input (left) and two times the invariant input (right). While the mRNA species $C_m$ and $P_m$ show oscillations with different amplitudes for the two inputs, the species $z_{c_m}$ of the normal form oscillates with the same amplitude for both inputs, while $\hat{p}$ converges to $0$ and $\log(2)$, respectively, i.e. the logarithm of the light amplitude scaling.
	}
\end{figure}

For this transformation, we utilize the non-uniqueness of the transformation into normal form to ``shape'' the transmissible input by changing the parameter $\alpha$ which multiplies $\tau_{p}(\delta_z(x))$. The simulations in Fig.~\ref{fig:osciDynamics} for example show the system excited by the transmissible input corresponding to $\alpha=-1.5$ and scaled by the factors $1$ and $2$, respectively. The dynamics of the two inputs thereby resemble the light intensities of the day-night rhythm during spring/autumn on cloudy, respectively sunny days.
\section{Discussion}
We derived a (necessary and sufficient) normal form for invariant systems demonstrating that all possible ways to achieve invariance are equivalent to a relatively simple integral-feedback mechanism. That one and only one such mechanism exists represents the most surprising result of our study, given that invariant networks are able to show a wide variety of dynamics.

Besides being of theoretical interest itself, the knowledge of the normal form lies the foundation for the rational design of invariant systems and thus for the application of the concept in engineering. Indeed, we enjoyed some distraction from the global pandemic by designing a wide variety of systems with specific properties by simply starting directly from the normal form representation itself. For example, we constructed scale-invariant systems with ramps as transmissible inputs, a rather interesting combination.
Furthermore, due to its sufficiency, the normal form also represents an excellent tool for the construction of counter-examples for various conjectures posed about invariant systems, a task we have to leave to the interested reader due to space limitations.

Finally, our last example represents a semi-realistic model of the core of the mammalian circadian rhythm. While the model is comparatively simple, it is structurally still sufficiently related to more realistic ones \cite{Comet2012,Leloup2003} such that it becomes at least plausible that the dual negative feedback structure of the mammalian circadian rhythm evolved to not only learn the phase, but also the amplitude of the day/night cycle. If this is however really the case remains a task for future theoretical and experimental studies.
\bibliographystyle{plain}  
\bibliography{mechanism_scale_invariance}

\end{document}